\begin{document}

\title{Randomized and Fault-tolerant Method of Subspace Corrections \thanks{The work of Hu was partially supported by NSF (DMS-1620063). The work of Xu and Zikatanov was supported in part by NSF (DMS-1522615)}
\thanks{On behalf of all authors, the corresponding author states that there is no conflict of interest. }
}

\titlerunning{Randomized Method of Subspace Corrections}        

\author{Xiaozhe Hu \and 
        	    Jinchao Xu  \and 
	    Ludmil T. Zikatanov}

\authorrunning{X.~Hu, J.~Xu, and L.~.T.~Zikatanov} 

\institute{Xiaozhe Hu \at
              Department of Mathematics, Tufts University, Medford, MA 02155, United States \\
              \email{Xiaozhe.Hu@tufts.edu}           
           \and
           Jinchao Xu \at
            Department of Mathematics, The Pennsylvania State University, University Park, PA16802, United States\\
            \email{xu@math.psu.edu}
            \and
            Ludmil T. Zikatanov
            \at
            Department of Mathematics, The Pennsylvania State University, University Park, PA16802, United States\\
            \email{ludmil@psu.edu}
}

\date{Received: date / Accepted: date}

\maketitle

\begin{abstract}
  In this paper, we consider the iterative method of
  subspace corrections with random ordering.  We prove identities for
  the expected convergence rate, which can provide sharp estimates for
  the error reduction per iteration.  We also study the fault-tolerant
  feature of the randomized successive subspace correction method by
  simply rejecting all the corrections when error occurs and show that the
  results iterative method converges with probability one.  Moreover,
  we also provide sharp estimates on the expected convergence rate for the
  fault-tolerant, randomized, subspace correction method.
\keywords{Method of Subspace Corrections \and Randomized Method \and Fault-Tolerant Method}
\end{abstract}

\section{Introduction}
In this paper, we consider iterative methods for solving the following
model problem: Given $f\in V$, find $u\in V$ such that
\begin{equation} \label{eqn:Au=f}
Au = f,
\end{equation}
where $V$ is a Hilbert space and $A: V \mapsto V$ is a symmetric
positive definite (SPD) linear operator.  The class of iterative
methods we are interested fall into the category of the so-called the
methods of subspace corrections (MSC) which have been widely studied
in the past several decades, see
\cite{Xu.J1992,Griebel.M;Oswald.P.1995a,Xu.J;Zikatanov.L2002}.  MSC is
a general framework for linear iterative methods for the solution of
linear problems in Hilbert spaces. Many well-known iterative methods
can be viewed in the MSC framework and, therefore, can be studied
using the general theory of MSC framework, for example, multigrid (MG)
method
\cite{Hackbusch.W.1985a,Bramble.J1993,Trottenberg.U;Oosterlee.C;Schuller.A2001}
and domain decomposition (DD) method
\cite{Quarteroni.A;Valli.A1999,Toselli.A;Widlund.O2005}.

There are basically two kinds of MSC depending on how the subspaces
are corrected. Parallel subspace corrections (PSC) method corrects all
the subspaces simultaneously while the successive subspace corrections
(SSC) method corrects one after another.  The standard SSC method
traverses the subspace problems in a fixed order, but one of the
interesting features of the SSC method is that the ordering need not
be fixed from the start and it can be chosen dynamically during the
iterations.  A classical example in this direction is the greedy
ordering algorithm for Gauss-Seidel method by Southwell
\cite{Southwell.R.1946a}.  Recently, the effects of the greedy
ordering on the convergence of the multiplicative Schwarz method have
been studied in \cite{Griebel.M;Oswald.P.2012a}.  Other algorithms,
such as randomized Kaczmarz iterative method have been recently
studied in detail (see
\cite{Liu.J;Wright.S.2013a,Mansour.H;Yilmaz.O.2013a,Eldar.Y;Needell.D.2011a,Needell.D.2010a,Strohmer.T;Vershynin.R.2009a,Oswald.P;Zhou.W2015a}). A
randomized Schwarz method has been discussed in
\cite{Griebel.M;Oswald.P.2012a}, and a randomized coordinate decent
methods for certain class of convex optimization problems was in the
focus of several recent works~\cite{Richtarik.P;Takac.M.2014a,Leventhal.D;Lewis.A.2010a,Nesterov.Y.2012a}.

One of our main results is the proof of an identity for the
\emph{expected} error reduction in energy norm per iteration step of
the randomized SSC method.  In addition, we propose and analyze the
convergence of a novel SSC method with $J$ subspaces in which the
ordering of the subspace corrections is chosen every $J$ iterations by
randomly selecting a permutation of $\mathbb{J}=\{1,\ldots,J\}$.  We
further provide a generalization of the XZ-identity
\cite{Xu.J;Zikatanov.L2002} which applies to the error reduction rate
in energy norm for such randomized SSC method. Next, we consider a
special feature of this, namely, its convergence in case of hardware
and/or software failures. On one hand, when such error occur, there is
no guarantee that the iterative method can produce a reasonable
approximation of the solution.  On the other hand, for many PDE-based
applications, solving the linear system of equations dominates the
overall simulation time (more than $80 \%$ of the simulation time for
large-scale simulations). Therefore, the development and analysis of
fault-tolerant linear solvers with low overhead is an important and
urgent issue for improving the overall reliability of the huge pool of
PDE-based applications. The standard approaches for constructing
fault-tolerant iterative methods usually belong to the so-called ABFT
(Algorithm-Based Fault Tolerance) category and basic linear and fault
tolerant versions of nonlinear iterative methods, such as successive over-relaxation (SOR) method, conjugate gradient (CG) method, and general minimal residual (GMRes) method have been studied in~\cite{Roy-Chowdhury.A;Bellas.N;Banerjee.P.1996a,Roy-Chowdhury.A;Banerjee.P.1993a,Hoemmen.M;Heroux.M.2011a,Shantharam.M;Srinivasmurthy.S;Raghavan.P.2012a}.
Another approach, proposed in~\cite{Stoyanov.M;Webster.C.2013a} relies
on rejecting large hardware error propagation and improves the
resilience of iterative methods with respect to silent errors.  More
recently, in \cite{Cui.T;Xu.J;Zhang.C2017a}, an intrinsic fault/error
tolerant feature of the MSC has been explored.  The idea is based on
introducing redundant subspaces and working with specially designed
mappings between subspaces and processors.

Our results also show how the randomization can be used to improve the
reliability of the SSC method in this paper. We built our
fault-tolerant, randomized SSC method on a procedure which rejects the
faulty subspace corrections when errors occur.  Basically, we only
update the solution when there is no error and, naturally, we are able
to show that this simple procedure, together with randomization,
converges almost surely (with probability $1$).  Our results
demonstrate the potential of the SSC method as a natural
fault-tolerant iterative method and provide theoretical justification
of the usage of the SSC method in improving the reliability of
long-running large-scale PDE applications.

The reminder of the paper is organized as follows.  We recall the PSC
method, the SSC method, the XZ-identity, and some basic notions from
probability theory in Section \ref{sec:msc}.  In Section
\ref{sec:alg}, we describe the randomized SSC method and its
fault-tolerant variant.  Section \ref{sec:converge} presents our main
results, i.e. the sharp identity estimates of the convergence rate and
almost sure convergence of the proposed SSC methods.  At the end, we
give some remarks in Section \ref{sec:conclu} to conclude the paper.

\section{Preliminaries}
In this section, we introduce the notation and review some basic
results and definitions from the theory of the subspace correction
methods and basic probability. 

\subsection{Method of Subspace Corrections} \label{sec:msc}
In this subsection, we recall the standard method of subspace corrections.  We consider a decomposition of the vector space $V$ which consists subspaces $V_i \subset V$, $i=1, 2, \cdots, J$, such that 
\begin{equation} \label{eqn:space-decomp}
V = \sum_{i=1}^J V_i. 
\end{equation}
This means that, for each $v \in V$, there exist $v_i\in V_i$, $i = 1, 2, \cdots, J$, such that $v=\sum_{i=1}^Jv_i$.  This representation of $v$ may not be unique in general, namely \eqref{eqn:space-decomp} is not necessarily a direct sum.

For each $i$, we define $Q_i,P_i:  V \mapsto V_i$ and $A_i: V_i \mapsto V_i$ by
\begin{equation}\label{eqn:Qi_Pi} 
(Q_iu,v_i)=(u,v_i),\quad (P_iu,v_i)_A=(u,v_i)_A,
\quad \forall \ u \in V, \ v_i\in V_i, 
\end{equation}
and
\begin{equation}\label{eqn:Ai}
(A_iu_i,v_i)=(Au_i,v_i),\quad \forall \ u_i, v_i\in V_i.
\end{equation}
$Q_i$ and $P_i$ are both orthogonal projections and $A_i$ is the restriction of $A$ on $V_i$ and is SPD.  

It follows from the definition that
\begin{equation}\label{eqn:AiPi=QiA}
A_iP_i=Q_iA.
\end{equation}
Indeed, $\forall \, u,v\in V$, we have $(Q_iAu,v)=(Au,Q_iv)=(u,Q_iv)_A=(P_iu,Q_iv)_A=(A_iP_iu,Q_iv)=(A_iP_iu,v)$, therefore $A_iP_i=Q_iA$. 

Since $V_i\subset V$, we may consider the natural inclusion $I_i:V_i \mapsto V$ defined by 
\begin{equation}\label{eqn:Ii}
(I_iu_i, v) =(u_i, v),\quad\forall \, u_i\in V_i. \quad v \in V.
\end{equation}
We notice that $Q_i=I_i^T$ as $(Q_iu,v_i)=(u,v_i)=(u,I_iv_i)=(I_i^Tu,v_i)$.  Similarly, we have $P_i=I_i^*$, where $I_i^*$ is the transpose of $I_i$ with respect to the inner product $(\cdot,\cdot)_A$ induced by $A$.

If $u$ is the solution of \eqref{eqn:Au=f}, then
\begin{equation}\label{eqn:Aiui=fi}
A_i u_i = f_i,
\end{equation}
where $u_i = P_i u$ and $f_i = Q_i f$.  \eqref{eqn:Aiui=fi} can be
viewed as the restriction of \eqref{eqn:Au=f} on the subspace $V_i$,
$i=1,2,\cdots, J$.  MSC solves these subspace equations
\eqref{eqn:Aiui=fi} iteratively.  In general, these
subspace equations are solved approximated.  More precisely, we
introduce a non-singular operator $R_i: V_i \mapsto V_i$,
$i=1,2,\cdots, J$, which assumed to be an approximation of $A_i^{-1}$
in certain sense.  And then the subspaces equation \eqref{eqn:Aiui=fi}
are solved approximated by $u_i \approx \hat{u}_i = R_if_i$.

As we pointed out in the introduction, there are two major type MSC,
depending on how the error is corrected by solving subspace
problems. These are \emph{the parallel subspace corrections (PSC) method} 
and \emph{the successive subspace corrections (SSC) method}.
The PSC method is similar in nature to the classical Jacobi method,
where the subspace equations are solved in parallel as in~Algorithm~\ref{alg:PSC}.
\begin{algorithm}
\caption{Parallel Subspace Correction Method} \label{alg:PSC}
\begin{algorithmic}[1]
\STATE Compute the residual by $r^m = f - A u^m$,
\STATE Approximately solve the subspace equations $A_i e_i = Q_i r^m$ by $\hat{e}_i = R_i Q_i r^m$ in parallel,
\STATE Update the iteration by $u^{m+1} = u^m + \sum_{i=1}^J I_i \hat{e}_i$.
\end{algorithmic}
\end{algorithm}
From the definitions in Algorithm \ref{alg:PSC}, it is easy to see that
\begin{equation*}
u^{m+1} = u^{m} + B_a (f - Au^m),
\end{equation*}
with 
\begin{equation}\label{eqn:Ba}
B_a = \sum_{i=1}^J I_i R_i Q_i = \sum_{i=1}^J I_i R_i I_i^t,
\end{equation}
which is the operator corresponds to the PSC method. 

The SSC method is similar to the classical Gauss-Seidel iterative
method and the error is corrected successively in every subspace as
outlined in Algorithm~\ref{alg:SSC}.
\begin{algorithm}
\caption{Successive Subspace Correction Method} \label{alg:SSC}
\begin{algorithmic}[1]
\STATE Compute the residual by $r^m = f - A u^m$,
\STATE Set $v^0 = u^m$,
\FOR {$k = 0 \to J-1$}
\STATE $v^{k+1} = v^{k} + R_k Q_k (f - A v^{k})$,
\ENDFOR
\STATE Update $u^{m+1} = v^J$.
\end{algorithmic}
\end{algorithm}

Let us define $T_i = R_iQ_iA$, and we note that $T_i:V_i \mapsto V_i$
is symmetric with respect to $(\cdot, \cdot)_A$, nonnegative definite,
and satisfies $T_i = R_i A_i P_i$. Moreover, $T_i = P_i$ if
$R_i=A_i^{-1}$.  Using this notation, we have
\begin{equation}\label{eqn:B_m}
I - B_m A = (I - T_J) (I - T_{j-1}) \cdots (I - T_i),
\end{equation} 
where $B_m$ is the operator approximating $A^{-1}$ which
corresponds to the SSC method. 

In this paper, we focus on the randomized and fault-tolerant versions
of the SSC method $B_m$.  For its convergence analysis, we have the
following well-known XZ-identity \cite{Xu.J;Zikatanov.L2002}.
\begin{theorem}[XZ-identity]\label{thm:XZ-identity}
Assume that $B_m$ is defined by the SSC method (Algorithm \ref{alg:SSC}), then we have
\begin{equation}\label{eqn:xz-identity}
\|I-B_mA\|_A^2=1-\frac{1}{1+c_0}=1-\frac{1}{c_1},
\end{equation}
where
\begin{equation}\label{eqn:xz-c0}
c_0=\sup_{\|v\|_A=1}\inf_{\sum
  v_i=v}\sum_{i=1}^J\|R_i^tw_i\|_{{\overline R_i^{-1}}}^{2}.
\end{equation}
with $w_i=A_iP_i\sum_{j\geq i}v_j-R_i^{-1}v_i$, 
and
\begin{equation}\label{eqn:xz-c1}
c_1=\sup_{\|v\|_A=1}\inf_{\sum v_i=v}\sum_{i=1}^J
\left\|\overline{T_i}^{-1/2}\left(v_i+T_i^*P_i\sum_{j>i}v_j\right)\right\|_{A}^2,
\end{equation}
with 
$$
\overline T_i=T_i+T_i^*-T_i^*T_i \text{ and } T_i=R_iA_iP_i,\quad 1\leq
i\leq J.
$$
\end{theorem}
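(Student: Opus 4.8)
The plan is to turn the operator norm on the left into an eigenvalue statement for the \emph{symmetrized} iteration, and to recognize the sup--inf defining $c_1$ as a weighted least-norm problem whose value is exactly the relevant Rayleigh quotient; the $c_0$-form will then follow from the $c_1$-form by a completion of squares. All adjoints are taken in $(\cdot,\cdot)_A$; write $E:=I-B_mA=(I-T_J)(I-T_{J-1})\cdots(I-T_1)$ and $E_k:=(I-T_k)\cdots(I-T_1)$, $E_0=I$, so $E_J=E$. First I would establish, from $(I-T_k^{*})(I-T_k)=I-\overline{T_k}$ and a short induction on $J$, the telescoping operator identity
\begin{equation*}
I-E^{*}E=\sum_{k=1}^{J}E_{k-1}^{*}\,\overline{T_k}\,E_{k-1}.
\end{equation*}
Under the hypothesis implicit in the statement that each $\overline{T_i}\ge0$ in $(\cdot,\cdot)_A$ (equivalently $\|I-T_i\|_A\le1$, which is what makes $\overline{T_i}^{-1/2}$ meaningful), the right-hand side is $A$-nonnegative, so $\|E\|_A^2=\lambda_{\max}(E^{*}E)=1-\lambda_{\min}(I-E^{*}E)$; when $V=\sum_iV_i$ this is $<1$, and then $\|E\|_A^2=1-\bigl[\sup_{\|v\|_A=1}\bigl((I-E^{*}E)^{-1}v,v\bigr)_A\bigr]^{-1}$ (in the non-coercive case one reads $\lambda_{\min}$ as the bottom of the spectrum, so $c_1=\infty$, $\|E\|_A=1$ is allowed).

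The crux is to evaluate the inner infimum in \eqref{eqn:xz-c1}. For a decomposition $v=\sum_iv_i$ set $y_i:=v_i+T_i^{*}P_i\sum_{j>i}v_j$; since $T_i^{*}P_i=T_i^{*}$ has range in $V_i$, the map $\{v_i\}\leftrightarrow\{y_i\}$ is a block-triangular bijection of $\prod_iV_i$, and inverting it --- writing $\sigma_i:=\sum_{j\ge i}v_j$ one gets $\sigma_i=y_i+(I-T_i^{*})\sigma_{i+1}$ --- shows that the constraint $\sum_iv_i=v$ is \emph{equivalent} to $\sum_iE_{i-1}^{*}y_i=v$. Hence the inner infimum is the value of the weighted least-norm problem ``minimize $\sum_i(\overline{T_i}^{-1}y_i,y_i)_A$ subject to $\Phi(\{y_i\}):=\sum_iE_{i-1}^{*}y_i=v$'', which equals $\bigl((\Phi\mathcal M^{-1}\Phi^{*})^{-1}v,v\bigr)_A$ with $\mathcal M=\operatorname{diag}(\overline{T_i}^{-1})$; since the $i$-th component of the adjoint $\Phi^{*}$ (w.r.t. the inner product $\sum_i(\cdot,\cdot)_A$ on $\prod_iV_i$) is $E_{i-1}$, the telescoping identity gives $\Phi\mathcal M^{-1}\Phi^{*}=\sum_iE_{i-1}^{*}\overline{T_i}E_{i-1}=I-E^{*}E$. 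Therefore the inner infimum equals $\bigl((I-E^{*}E)^{-1}v,v\bigr)_A$, so $c_1=\sup_{\|v\|_A=1}\bigl((I-E^{*}E)^{-1}v,v\bigr)_A$, and combining with the first paragraph gives $\|E\|_A^2=1-1/c_1$. (When some $\overline{T_i}$ is singular, read the $i$-th term as $+\infty$ unless $y_i\in\operatorname{Range}(\overline{T_i})$ and use pseudoinverses; the minimizer stays in the ranges. The ``$\ge$'' half also has a self-contained proof: $\sum_i(E_{i-1}v,y_i)_A=\|v\|_A^2$, obtained by the rearrangement $T_iE_{i-1}v=E_{i-1}v-E_iv$ and a change of the order of summation, combined with Cauchy--Schwarz.)

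For $c_1=1+c_0$, I would substitute $T_i=R_iA_iP_i$, giving $\overline{T_i}|_{V_i}=\overline{R_i}A_i$ and hence $\|\overline{T_i}^{-1/2}x\|_A^2=(\overline{R_i}^{-1}x,x)$ for $x\in V_i$. With $\hat A_i:=R_i^{-t}+R_i^{-1}-A_i$ (so $\overline{R_i}=R_i^{t}\hat A_iR_i$), one checks $y_i=w_i+\hat A_iv_i$, where $w_i:=A_iP_i\sum_{j\ge i}v_j-R_i^{-1}v_i$ is the quantity in \eqref{eqn:xz-c0}; expanding the square gives $\|w_i+\hat A_iv_i\|_{\hat A_i^{-1}}^2=\|w_i\|_{\hat A_i^{-1}}^2+2(w_i,v_i)+(\hat A_iv_i,v_i)$, and a direct computation (in which the $(R_i^{-1}v_i,v_i)$-terms cancel) gives $\sum_i\bigl[2(w_i,v_i)+(\hat A_iv_i,v_i)\bigr]=\|v\|_A^2$ for every decomposition. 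So the $i$-sum in \eqref{eqn:xz-c1} equals $\|v\|_A^2+\sum_i\|w_i\|_{\hat A_i^{-1}}^2=\|v\|_A^2+\sum_i\|R_i^{t}w_i\|_{\overline{R_i^{-1}}}^2$, and taking $\sup_{\|v\|_A=1}\inf$ gives $c_1=1+c_0$, completing \eqref{eqn:xz-identity}.

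The step I expect to be the main obstacle is the middle one: verifying that the triangular change of variables $\{v_i\}\leftrightarrow\{y_i\}$ collapses the affine constraint $\sum_iv_i=v$ exactly to the single clean condition $\sum_iE_{i-1}^{*}y_i=v$, and making the least-norm evaluation rigorous when the $\overline{T_i}$ are only semidefinite (and, for full generality, in the non-coercive/infinite-dimensional setting). The telescoping identity and the $c_0$-bookkeeping are routine once that reformulation is secured.
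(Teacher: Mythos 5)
The paper does not actually prove Theorem~\ref{thm:XZ-identity}; it is quoted from \cite{Xu.J;Zikatanov.L2002}, so there is no in-paper argument to match yours against. Judged on its own, your proof is correct and is essentially the modern streamlined derivation of the XZ-identity (telescoping factorization plus a constrained least-norm evaluation of the sup--inf), rather than the original 2002 argument, which establishes the identity by exhibiting an explicit minimizing decomposition and estimating directly. All the load-bearing steps check out: the identity $I-E^{*}E=\sum_k E_{k-1}^{*}\overline{T_k}E_{k-1}$ follows from $(I-T_k^{*})(I-T_k)=I-\overline{T_k}$ by telescoping; the triangular substitution $\sigma_i=y_i+(I-T_i^{*})\sigma_{i+1}$ does convert $\sum_i v_i=v$ into $\sum_i E_{i-1}^{*}y_i=v$ bijectively; and $\Phi\mathcal M^{-1}\Phi^{*}=I-E^{*}E$ holds because $\overline{T_i}P_i=\overline{T_i}$ (strictly, the $i$-th component of $\Phi^{*}$ is $P_iE_{i-1}$, not $E_{i-1}$, but this projection is absorbed for exactly that reason --- worth saying explicitly). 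Your approach buys a cleaner conceptual picture ($c_1$ \emph{is} the value of a quadratic minimization whose Schur complement is $I-E^{*}E$) at the cost of having to justify the pseudoinverse/least-norm formula when some $\overline{T_i}$ is only semidefinite, which you correctly flag.

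Two small repairs. First, the theorem as stated in the paper silently assumes each subspace correction is convergent, i.e.\ $\overline{T_i}$ is SPD on $V_i$ (equivalently $\overline{R_i}=R_i^{t}+R_i-R_i^{t}A_iR_i$ is SPD); you are right that this is needed for $\overline{T_i}^{-1/2}$ and for the nonnegativity of $I-E^{*}E$, and your proof should state it as a hypothesis rather than leave it implicit. Second, in the $c_0$-bookkeeping the identity should read $y_i=R_i^{t}\bigl(w_i+\hat A_i v_i\bigr)$, not $y_i=w_i+\hat A_i v_i$: indeed $T_i^{*}P_i=R_i^{t}A_iP_i$ on $V$, so $y_i=R_i^{t}\bigl(R_i^{-t}v_i+A_iP_i\sum_{j\ge i}v_j-A_iv_i\bigr)$. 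The factor $R_i^{t}$ is then exactly what converts $\|y_i\|_{\overline{R_i}^{-1}}^{2}$ into $\|w_i+\hat A_iv_i\|_{\hat A_i^{-1}}^{2}$ via $\overline{R_i}=R_i^{t}\hat A_iR_i$, after which your expansion and the cancellation $\sum_i\bigl[2(w_i,v_i)+(\hat A_iv_i,v_i)\bigr]=\|v\|_A^2$ go through as written, yielding $c_1=1+c_0$. With these two points fixed, the argument is a complete and valid proof.
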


\begin{corollary}
  In case that the subspace problems are solved exactly,
  i.e. $R_i=A_i^{-1}$, the X-Z identity \eqref{eqn:xz-identity} holds
  with
\begin{equation}
c_0=\sup_{\|v\|_A=1}\inf_{\sum v_i=v}\sum_{i=1}^J\left\| P_i\sum_{j> i}v_j\right\|_{A_i}^{2}.
\end{equation}
and
\begin{equation}
c_1=\sup_{\|v\|_A=1}\inf_{\sum v_i=v}\sum_{i=1}^J
\left\|P_i\left(\sum_{j\geq i}v_j\right)\right\|_{A}^2.
\end{equation}
\end{corollary}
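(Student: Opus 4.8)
The plan is to specialize the formulas \eqref{eqn:xz-c0} and \eqref{eqn:xz-c1} from Theorem~\ref{thm:XZ-identity} to the case $R_i = A_i^{-1}$; the corollary is an exact substitution, so no new estimate is needed and the identity \eqref{eqn:xz-identity} (including the relation $1+c_0 = c_1$) is inherited verbatim. First I would record the consequences of $R_i = A_i^{-1}$ for the auxiliary operators. Since $T_i = R_i A_i P_i$, we get $T_i = A_i^{-1} A_i P_i = P_i$, i.e.\ $T_i$ is just the $A$-orthogonal projection onto $V_i$; in particular $T_i^* = P_i$ (as $P_i$ is $A$-selfadjoint) and $T_i^2 = T_i$, whence $\overline{T_i} = T_i + T_i^* - T_i^*T_i = 2P_i - P_i = P_i$. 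Likewise $R_i^{-1} = A_i$ and, since $A_i$ is symmetric positive definite on $V_i$, $R_i^t = (A_i^{-1})^t = A_i^{-1}$; a short computation also shows that the weight operator $\overline{R_i}^{-1}$ in \eqref{eqn:xz-c0} reduces to $A_i$ in this case.

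Next I would simplify the $c_1$ summand. With $T_i^* = P_i$, the argument of $\overline{T_i}^{-1/2}$ becomes $v_i + T_i^* P_i \sum_{j>i} v_j = v_i + P_i \sum_{j>i} v_j$, which lies in $V_i$, and since $P_i v_i = v_i$ we may rewrite it as $P_i\bigl(\sum_{j\geq i} v_j\bigr)$. Because $\overline{T_i} = P_i$ acts as the identity on its range $V_i$, the Moore--Penrose pseudo-inverse $\overline{T_i}^{-1/2}$ also acts as the identity there, so the summand collapses to $\|P_i \sum_{j\geq i} v_j\|_A^2$; substituting into \eqref{eqn:xz-c1} gives the claimed formula for $c_1$.

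For the $c_0$ summand I would first compute $w_i = A_i P_i \sum_{j\geq i} v_j - R_i^{-1} v_i = A_i P_i \sum_{j\geq i} v_j - A_i v_i = A_i P_i \sum_{j>i} v_j$, again using $P_i v_i = v_i$. Then $R_i^t w_i = A_i^{-1} A_i P_i \sum_{j>i} v_j = P_i \sum_{j>i} v_j$, and since the weight operator equals $A_i$ on $V_i$, the summand becomes $\bigl(A_i P_i \sum_{j>i} v_j,\, P_i \sum_{j>i} v_j\bigr) = \|P_i \sum_{j>i} v_j\|_{A_i}^2$, which plugged into \eqref{eqn:xz-c0} yields the stated formula for $c_0$.

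I expect the only genuinely delicate point to be the correct interpretation of $\overline{T_i}^{-1/2}$ (and of the weight in \eqref{eqn:xz-c0}) once the symmetrized operator is singular on all of $V$: here $\overline{T_i} = P_i$ is a projection, so one must verify that the vector it is applied to always belongs to its range $V_i$ — which is exactly what the computations above establish — so that the pseudo-inverse reduces to the identity and the norms are well defined. Everything else is routine bookkeeping.
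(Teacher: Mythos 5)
Your proposal is correct and is essentially the argument the paper intends: the corollary is a direct specialization of Theorem~\ref{thm:XZ-identity} obtained by substituting $R_i=A_i^{-1}$, which gives $T_i=T_i^*=\overline{T_i}=P_i$ and $\overline{R_i}^{-1}=A_i$, after which the summands in \eqref{eqn:xz-c0} and \eqref{eqn:xz-c1} collapse exactly as you compute (the paper simply omits these routine substitutions). Your remark that $\overline{T_i}^{-1/2}$ must be interpreted on the range $V_i$, where $P_i$ acts as the identity, is the right way to handle the only delicate point.
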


\section{Randomized and Fault-tolerant SSC} \label{sec:alg}
Traditionally, the SSC method visits each subspace in a pre-determined ordering, i,e, it solves subspace problems one by one in a fixed, problem-independent order.  Here we consider to choose the ordering randomly, which is a key component of the algorithms.  Another component we introduce into the SSC method is the fault-tolerant ability enabled by randomization.  In this section, we formulate those algorithms and their convergence analysis are discussed in the next section. 

\subsection{Randomized SSC}
In the randomized SSC method, we randomly choose the next subspace
in which the error needs to be corrected. We randomly choose the subspace, 
according to certain probability distribution as in Algorithm \ref{alg:randomSSC-1}.

\begin{algorithm}
\caption{SSC method with random ordering (Version 1)} \label{alg:randomSSC-1}
\begin{algorithmic}[1]
\STATE Randomly choose an index $i \in \{ 1, 2, \cdots, J \}$ with probability $p_i = \frac{1}{J}$,
\STATE $u^{k+1} = u^k + R_i Q_i (f- Au^k)$
\end{algorithmic}
\end{algorithm}

As discussed in \cite{Griebel.M;Oswald.P.2012a}, the cost of randomly
picking $i$ does not exceed $\mathcal{O}( \log J)$ and each update
in the SSC method can be done in $\mathcal{O}(N)$ operations where $N$ is
the dimension of the vector space $V$.  Therefore, the overall
computational cost of the randomized SSC method is comparable to the
standard SSC method which is a very desirable feature.

Note that in Algorithm \ref{alg:randomSSC-1}, there is no guarantee
that all the $J$ subspaces are all corrected in $J$ iterations.
Therefore, we propose the second version randomized SSC method, such
that the $J$ subspaces are guaranteed to be corrected within $J$
iterations by randomly choosing the ordering in which the error is
corrected.  To do this, we first consider the set of all permutations
of $\mathbb{J} = \{ 1,2, \cdots, J \} $. Then a permutation of
$\mathbb{J}$ is any bijective mapping
$\sigma : \mathbb{J} \mapsto \mathbb{J}$.  The idea is to randomly
choose a permutation $\sigma$ from the set of permutations and apply
the SSC following the correction order as specified by $\sigma$. We
have the randomized SSC method presented in Algorithm
\ref{alg:randomSSC-2}.
\begin{algorithm}[h!]
\caption{SSC method with random ordering (Version 2)} \label{alg:randomSSC-2}
\begin{algorithmic}[1]
\STATE Compute the residual by $r^m = f - A u^m$,
\STATE $v……0 = u^m$,
\STATE Randomly choose a permutation $\sigma$ of the indexes $\mathbb{J} = \{  1,2, \cdots, J \}$ with probability $\frac{1}{J!}$,
\FOR {$k = 0 \to J-1$}
\STATE $v^{k+1} = v^{k} + R_{\sigma(k)} Q_{\sigma(k)} (f - A v^{k})$,
\ENDFOR
\STATE Update the iteration by $u^{m+1} = v^J$.
\end{algorithmic}
\end{algorithm}

We need to note that, in Algorithm \ref{alg:randomSSC-2}, the cost of randomly picking the permutation $\sigma$ is $\mathcal{O}(\log J!) = \mathcal{O}(J \log J)$ which is expensive than Algorithm \ref{alg:randomSSC-1}.  But each update still can be done in $\mathcal{O}(N)$ operations.  It is reasonable to assume that $J = \mathcal{O}(N)$ at the worst case, therefore, the overall computational cost of Algorithm \ref{alg:randomSSC-2} is $\mathcal{O}(N \log N)$ which is slightly expensive than traditional SSC method and Algorithm \ref{alg:randomSSC-1}.

\subsection{Fault-tolerant Randomized SSC}
Another feature pertaining to this randomized SSC method is its
fault-tolerance.  During the iterative process, the correction or
update may fail due to hard and/or soft errors, which, if not handled
correctly, may result in a stagnating iterative method.  We propose a
simple approach which can handle all such scenarios (see Algorithm
\ref{alg:randomSSC-fault}).  Basically, we do not update the
approximation to the solution during the iterations when error
occurs. The randomization of the ordering helps to guarantee that such
simple treatment leads to theoretically convergent iterative method.
\begin{algorithm}
\caption{Fault-tolerant SSC method with random ordering} \label{alg:randomSSC-fault}
\begin{algorithmic}[1]
\IF {error occurs}
\STATE $u^{k+1} = u^k$, 
\ELSE 
\STATE Randomly choose an index $i \in \{ 1, 2, \cdots, J \}$ with probability $p_i = \frac{1}{J}$,
\STATE $u^{k+1} = u^k + R_i Q_i (f- Au^k)$.
\ENDIF
\end{algorithmic}
\end{algorithm}

Similar to Algorithm \ref{alg:randomSSC-1}, the cost of randomly picking $i$ is $\mathcal{O}(\log J)$ and each correction costs $\mathcal{O}(N)$.  Therefore, the overall cost of Algorithm \ref{alg:randomSSC-fault} is comparable to the cost of the traditional SSC method and Algorithm \ref{alg:randomSSC-1}.

\section{Convergence Analysis} \label{sec:converge}
In this section, we discuss the convergence analysis of the randomized
and fault-tolerant SSC methods (Algorithm \ref{alg:randomSSC-1} -
\ref{alg:randomSSC-fault}).  We want to emphasize that, instead of
usual upper bound estimation, we present identities to estimate the
convergence rate of the randomized and fault-tolerant SSC methods. We
note that in the analysis below we relate the expected convergence
rate of the SSC method, to the quality of the PSC preconditioner,
which is independent of the ordering. This is not surprising and shows
rigorously the fact that the expected (average) convergence rate of an
SSC method is also independent of the ordering.

\subsection{Convergence Rate of the Randomized SSC}
First, we consider Algorithm \ref{alg:randomSSC-1} and the main result
is stated in the following theorem.  Here, we use $B_a$ to denote the
operator corresponding to the PSC method with $\bar{R}_i = R_i^t + R_i
- R_i^tA_iR_i$ as the inexact subspace solver.
\begin{theorem} \label{thm:randomSSC-1}
The Algorithm \ref{alg:randomSSC-1} converges with the expected error decay rate,
\begin{equation} \label{eqn:randomSSC-1}
E(\|u - u^{k+1}\|_A^2) = (1 -  \frac{\delta_k}{J} ) E( \| u-u^{k}\|_A^2) = \prod_{\ell=0}^{k}(1-\frac{\delta_{\ell}}{J})\| u - u^0 \|_A^2,
\end{equation}
where $\delta_k = \frac{E( (B_a A e^k, e^k )_A) }{E((e^k, e^k)_A)} > 0$ and $e^k= u - u^k$. Moreover, if $\| I - T_i \|_A < 1$, then $\delta_k < J$.
\end{theorem}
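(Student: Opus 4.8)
The plan is to compute the expectation of one step of Algorithm~\ref{alg:randomSSC-1} directly, conditioned on the current iterate $u^k$ (equivalently on $e^k = u - u^k$), and recognize the resulting operator as the PSC operator $B_a$ built from the smoothers $\bar R_i$. First I would write the error propagation of a single random step: if index $i$ is chosen, then $e^{k+1} = (I - T_i) e^k$, so
\begin{equation*}
\|e^{k+1}\|_A^2 = \|(I-T_i)e^k\|_A^2 = \|e^k\|_A^2 - \bigl( (T_i + T_i^* - T_i^* T_i) e^k, e^k \bigr)_A = \|e^k\|_A^2 - (\bar T_i e^k, e^k)_A,
\end{equation*}
where $\bar T_i = T_i + T_i^* - T_i^*T_i$ as in the XZ-identity. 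Taking the conditional expectation over the uniform choice $p_i = 1/J$ gives
\begin{equation*}
E\bigl( \|e^{k+1}\|_A^2 \,\big|\, e^k \bigr) = \|e^k\|_A^2 - \frac{1}{J}\sum_{i=1}^J (\bar T_i e^k, e^k)_A .
\end{equation*}

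The key algebraic step is then to identify $\sum_{i=1}^J \bar T_i$ with $B_a A$ where $B_a$ uses $\bar R_i = R_i^t + R_i - R_i^t A_i R_i$. From the definitions, $T_i = R_i Q_i A = R_i A_i P_i$ and $T_i^* = A^{-1} T_i^* A$ is the $A$-adjoint, so $T_i^* = A^{-1} A_i P_i R_i^t A_i P_i$ (using $P_i^* = P_i$, $A_i$ symmetric); a short computation shows $T_i + T_i^* - T_i^* T_i = I_i \bar R_i I_i^t A$ on the relevant subspace, hence $\sum_i \bar T_i = \bigl(\sum_i I_i \bar R_i I_i^t\bigr) A = B_a A$ by \eqref{eqn:Ba}. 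Therefore
\begin{equation*}
E\bigl( \|e^{k+1}\|_A^2 \,\big|\, e^k \bigr) = \|e^k\|_A^2 - \frac{1}{J}(B_a A e^k, e^k)_A .
\end{equation*}
Taking full expectations and dividing/multiplying by $E((e^k,e^k)_A)$ produces exactly the factor $1 - \delta_k/J$ with $\delta_k$ as defined; iterating the one-step identity down to $e^0$ (which is deterministic, so $E\|e^0\|_A^2 = \|u-u^0\|_A^2$) gives the product formula. Positivity $\delta_k > 0$ follows because each $\bar T_i$ is nonnegative (indeed $\bar R_i$ is SPD under the standing assumption that $R_i$ is a convergent smoother, so $(\bar T_i v, v)_A = \|R_i^t I_i^t A v\|_{\bar R_i^{-1}}^2 \ge 0$) and at least one term is strictly positive unless $e^k = 0$; and $B_a A$ is SPD, so the numerator of $\delta_k$ is strictly positive whenever $e^k \ne 0$ with positive probability.

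For the final claim, that $\|I - T_i\|_A < 1$ implies $\delta_k < J$: the hypothesis gives $(\bar T_i v, v)_A = \|v\|_A^2 - \|(I-T_i)v\|_A^2 \le (1 - \rho_i^2)\|v\|_A^2$ is not quite enough by itself, but combined with $\bar T_i \le I$ (since $\|I - T_i\|_A \le 1$ always, as $T_i$ is an $A$-symmetric nonnegative contraction-type operator) we get $\bar T_i \preceq I$ in the $A$-inner product with strict inequality somewhere; more carefully, $\|I - T_i\|_A < 1$ forces $\bar T_i$ to have all eigenvalues in $(0,1]$, hence $\sum_i \bar T_i \preceq$ something strictly less than $J\cdot I$ is \emph{not} automatic — so the cleaner route is: $(B_a A v, v)_A = \sum_i (\bar T_i v, v)_A \le \sum_i \|v\|_A^2 \cdot \lambda_{\max}(\bar T_i)$, and strict inequality $\|I-T_i\|_A<1$ for the index achieving the worst case yields $(B_a A v,v)_A < J\|v\|_A^2$, whence $\delta_k < J$. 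I expect the main obstacle to be precisely this last step: making the strictness rigorous requires care, since one must argue that the supremum in the Rayleigh quotient for $B_a A$ is strictly below $J$ — this is where I would invoke that each $\bar T_i \preceq I$ with the bound $\|I-T_i\|_A < 1$ ruling out the value $1$ being approached simultaneously, or simply use a compactness/finite-dimensional argument in the typical application. The algebraic identification $\sum_i \bar T_i = B_a A$ is routine but must be done carefully with the adjoints.
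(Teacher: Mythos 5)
Your proposal follows essentially the same route as the paper's proof: the one-step identity $\|e^{k+1}\|_A^2=\|e^k\|_A^2-(\bar T_i e^k,e^k)_A$, the conditional expectation over the uniform index choice giving $\|e^k\|_A^2-\frac{1}{J}(B_aAe^k,e^k)_A$, and the tower property $E(X)=E(E(X|Y))$ to close the recursion --- the paper merely formalizes the conditioning via an explicit product probability space $\bm{\Omega}_k=\Omega^k$ with random variables $S_k$ and $X_k=g_k\circ S_k$ instead of conditioning on $e^k$ directly. Your hesitation about the strictness of $\delta_k<J$ identifies a real subtlety, but the paper handles it no more carefully than you do: it simply asserts $(\sum_{i=1}^J\bar T_i e^k,e^k)_A<J(e^k,e^k)_A$ from $\|I-T_i\|_A<1$, which amounts to the bound $(\bar T_i v,v)_A=\|v\|_A^2-\|(I-T_i)v\|_A^2\le\|v\|_A^2$ with the same implicit strictness assumption you flagged.
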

\begin{proof}
It is easy to see that, given $i$, we have
\begin{align*}
\| e^{k+1} \|_A^2 = \| (I - T_i) e^k \|_A^2 = ( \left( I - \bar{T}_i \right) e^k, e^k )_A,
\end{align*}
where $\bar{T}_i$ is the symmetrized version of $T_i$.  According to
the way we pick the index $i$, at iteration $k$, consider the probability space
$(\bm{\Omega}_k, \bm{\mathcal{F}}_k, P)$ where 
\begin{equation*}
\bm{\Omega}_k = \underbrace{\Omega \times \Omega \times \cdots \times \Omega}_{k},  \quad \Omega:= \{1, 2, \cdots, J\},
\end{equation*}
$\bm{\mathcal{F}}_k$ contains all the subset of $\bm{\Omega}_k$, and $P(a) = |a| / |\bm{\Omega}_k|$, $a \in \bm{\mathcal{F}}_k$.  We define random variables $S_k(x): \bm{\Omega}_k \mapsto \mathbb{R}^k$ such that,
\begin{equation*}
S_{k}(\omega_k) = \omega_k, \ \omega_k = (i_1, i_2, \cdots, i_k) \in \bm{\Omega}_k, \ i_m \in \Omega, \ m = 1, \cdots, k.
\end{equation*}
Because we choose an index $i \in \{  1,2, \cdots, J \}$ uniformly with probability $1/J$ and picking $i$ is independent of the iteration number $k$, we have,
\begin{equation*}
P( S_{k+1} = \omega_{k+1} \, | \, S_k = \omega_k)   
=
\begin{cases}
\frac{1}{J}, & \ \text{if} \ \omega_{k+1} = (\omega_k, i), \ i= 1, 2, \cdots, J \\
0, & \text{otherwise}
\end{cases}
\end{equation*}
Then we define functions 
\begin{equation*}
g_{k}(\omega_k) := \| (I - T_{i_k}) (I - T_{i_{k-1}}) \cdots (I - T_{i_1}) e^0 \|_A^2, \quad \omega_k = (i_1, i_2, \cdots, i_k)
\end{equation*}
 and the compositions of $g_{k}$ and $S_{k}$ define other random variables which we denote by
\begin{equation*}
X_{k} = g_{k}\circ S_{k},
\end{equation*}
and, for $\omega_{k} \in \bm{\Omega}_{k}$
\begin{equation*}
X_{k}(\omega_{k}) = g_{k}(S_{k}(\omega_{k})) = \| (I - T_{i_k}) (I - T_{i_{k-1}}) \cdots (I - T_{i_1}) e^0 \|_A^2 := \| e^k \|_A^2,
\end{equation*}

Then we compute the conditional expectation as following,
\begin{align*}
E(X_{k+1} \, | \, S_{k})(\omega_k) &= E(X_{k+1} \, | \, S_{k} = \omega_k) \\
&= \sum_{\omega_{k+1} \in \bm{\Omega}_{k+1}} X_{k+1}(\omega_{k+1}) P( S_{k+1} = \omega_{k+1} \, | \, S_k = \omega_k ) \\
	& = \sum_{i=1}^J \| (I - T_i)e^{k} \|_A^2 \, \frac{1}{J}  \\
	& = \sum_{i=1}^{J} \frac{1}{J} ( (I-\bar{T}_i) e^k, e^k )_A  \\
	& = \| e^k \|^2_A - \frac{1}{J} ( \sum_{i}^J \bar{T}_i e^k, e^k)_A  \\
	&= \| e^k \|_A^2 - \frac{1}{J} (B_a A e^k, e^k)_A.
\end{align*}
Apply $E(X) = E(E(X|Y))$, use the linearity of the expectation, and let $X= X_{k+1}$, $Y = S_k$, we then have
\begin{align*}
E(\| e^{k+1} \|^2_A) = E(X_{k+1})  & = E(E(X_{k+1}|S_k))  \\
&= E( \| e^k \|^2_A - \frac{1}{J} ( B_a A e^k, e^k)_A )   \\
& =  E(\|e^k\|_A^2) - \frac{1}{J} E( (B_a A e^k, e^k)_A ) \\
& = \left(  1 - \frac{1}{J} \frac{E( (B_a A e^k,e^k)_A)}{ E( (e^k, e^k)_A ) }  \right) E(\|e^k\|_A^2)
\end{align*}
Note that, if $\| I - T_i \|_A < 1 $, we have $(B_a A e^k, e^k)_A = (\sum_{i=1}^J \bar{T}_i e^k, e^k)_A < J  (e^k, e^k)_A$ which implies that $ E( (B_aAe^k, e^k)_A ) < J E( (e^k, e^k)_A) $, i.e. $\delta_k < J$.  This completes the proof. 
\end{proof}

\begin{remark} \label{rem:delta-randomSSC-1}
Now we discuss about the constant $\delta_k$, 
we have
\begin{equation*}
\lambda_{\min}(B_aA) \| e^k \|_A^2 \leq (B_a A e^k, e^k )_A  \leq  \lambda_{\max}(B_aA) \| e^k \|_A^2.
\end{equation*}
Use the linearity and monotonicity of expectation, we have
 \begin{equation*}
\lambda_{\min}(B_aA) E(\|e^k\|_A^2) \leq  E((B_a A e^k, e^k )_A) \leq \lambda_{\max}(B_aA) E(\|e^k\|_A^2).
\end{equation*}

Therefore, we have
\begin{equation*}
\lambda_{\min}(B_aA) \leq \delta_k \leq \lambda_{\max} (B_aA),
\end{equation*}
and
\begin{equation*}
1 - \frac{\delta_k}{J} \leq 1 - \frac{\lambda_{\min}(B_aA)}{J}.
\end{equation*}
\end{remark}

\begin{remark}
  After $J$ steps, we have $E(\| e^{J} \|^2_A) \leq \left(1 -
    \frac{\lambda_{\min}(B_a A)}{J}\right)^J \| e^0 \|_A^2$ and the
  energy error reduction is bounded by $\left(1 -
    \frac{\lambda_{\min}(B_a A)}{J}\right)^J \le
  \exp\left({-\lambda_{\min}(B_a A)}\right)$.
\end{remark}

\begin{remark}
 Taking the multigrid method as an example, if we use deterministic
 approach, for any given $i$, we can only get a reduction that is
 given by a smoother (such as Gauss-Seidel), namely
 \begin{equation*}
   \|e^{k+1}\|_A\le (1-ch^2)\|e^k\|_A.
 \end{equation*}
But for randomized method, we have 
 \begin{equation*}
    E(\|e^{k+1}\|_A)\le (1-c/|\log h|) E(\|e^k\|_A)
   \quad \mbox{since } J=\mathcal O(|\log h|).
 \end{equation*}
 This is a significantly improvement.
\end{remark}
\begin{remark}
  Here we choose next subspace used for correction
  based on a uniformly distributed random variable with probability
  constant probability $p_i = \frac{1}{J}$.  However, one can use
  other choices as long as the probability does not depend on the
  iteration number $k$ and similar results can be derived.  Same
  statement also applied to the theoretical results that follow.
\end{remark}
\begin{equation*} 
E(\|u - u^{kJ}\|_A^2) \le
\delta^kE(\| u - u^0 \|_A^2),\quad \delta= e^{-\lambda_{\min}(B_aA)}
\end{equation*}

A special case of Algorithm \ref{alg:randomSSC-1} is that the subspace
corrections are exact, i.e, $R_i = A_i^{-1}$.  And the following
corollary is a direct consequence of Theorem \ref{thm:randomSSC-1}.

\begin{corollary} \label{thm:randomSSC-1-exact} Assume that the
  probabilities $p_i$, $i\in \mathbb{J}$ are independent of the iteration number $k$.
  Then Algorithm \ref{alg:randomSSC-1} with $R_i = A_i^{-1}$ converges
  with the following expected error reduction:
\begin{equation} \label{eqn:randomSSC-1-exact}
E(\|u - u^{k+1}\|_A^2) = (1 -  \frac{\delta_k}{J} ) E( \| u-u^{k}\|_A^2) = \prod_{\ell=0}^{k} (1 - \frac{\delta_{\ell}}{J}) \| u - u^0 \|_A^2,
\end{equation}
where $0 < \delta_k = \frac{E\left(\sum_{i=1}^J ||P_i e^k ||_A^2\right)}{E(||e^k||_A^2)} < J$.
\end{corollary}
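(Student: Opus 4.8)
The plan is to derive the corollary as a direct specialization of Theorem~\ref{thm:randomSSC-1}, so the first step is simply to evaluate every object appearing there when $R_i=A_i^{-1}$. From $T_i=R_iA_iP_i$ one gets $T_i=P_i$, and since $P_i$ is an orthogonal projection with respect to $(\cdot,\cdot)_A$ (so $P_i^*=P_i$ and $P_i^2=P_i$), the symmetrized operator is $\overline{T}_i=T_i+T_i^*-T_i^*T_i=P_i$. In the same way $\overline{R}_i=R_i^t+R_i-R_i^tA_iR_i=A_i^{-1}$, hence $B_a=\sum_{i=1}^JI_iA_i^{-1}Q_i$ and, using $Q_iA=A_iP_i$, $B_aA=\sum_{i=1}^JI_iA_i^{-1}A_iP_i=\sum_{i=1}^JP_i$ (viewing $I_iP_i$ as the $A$-orthogonal projection onto $V_i$ on all of $V$).

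Next I would substitute these expressions into the statement of Theorem~\ref{thm:randomSSC-1}. For any $e\in V$, using $P_i=P_i^*=P_i^2$ we have $(B_aAe,e)_A=\sum_i(P_ie,e)_A=\sum_i(P_ie,P_ie)_A=\sum_i\|P_ie\|_A^2$, so
\[
\delta_k=\frac{E\big((B_aAe^k,e^k)_A\big)}{E\big((e^k,e^k)_A\big)}=\frac{E\big(\sum_{i=1}^J\|P_ie^k\|_A^2\big)}{E\big(\|e^k\|_A^2\big)},
\]
and the decay identity \eqref{eqn:randomSSC-1-exact} is exactly \eqref{eqn:randomSSC-1} written for this choice of solver. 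The only hypothesis used in the proof of Theorem~\ref{thm:randomSSC-1} was that picking the index is independent of the iteration number, which is precisely the assumption retained in the corollary, so no part of that argument has to be redone.

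It then remains to establish $0<\delta_k<J$. Positivity follows because $\sum_iP_i=B_aA$ is $A$-SPD: if $\sum_i\|P_ie\|_A^2=0$ then $P_ie=0$ for all $i$, hence $(e,v_i)_A=0$ for all $v_i\in V_i$ and all $i$, so $e\perp_A\sum_iV_i=V$ and $e=0$; thus $\lambda_{\min}(B_aA)>0$ and Remark~\ref{rem:delta-randomSSC-1} gives $\delta_k\ge\lambda_{\min}(B_aA)>0$. For the upper bound the key observation is that the post-correction error $e^k=(I-P_{i_k})e^{k-1}$ is $A$-orthogonal to $V_{i_k}$, so $\|(I-P_{i_k})e^k\|_A=\|e^k\|_A$; combined with the Pythagorean identity $\|P_ie^k\|_A^2=\|e^k\|_A^2-\|(I-P_i)e^k\|_A^2$ this yields $\sum_{i=1}^J\|P_ie^k\|_A^2\le(J-1)\|e^k\|_A^2$ for every realization and every $k\ge1$, and taking expectations gives $\delta_k\le J-1<J$. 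For $k=0$ the bound $\sum_i\|P_ie^0\|_A^2<J\|e^0\|_A^2$ still holds unless $e^0\in\bigcap_{i=1}^JV_i$, a degenerate case in which a single exact correction already produces the solution.

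The substitution step is routine bookkeeping; the only point requiring care is the strict bound $\delta_k<J$. In the exact-solve regime $\|I-T_i\|_A=\|I-P_i\|_A=1$ in general, so the sufficient condition $\|I-T_i\|_A<1$ invoked in Theorem~\ref{thm:randomSSC-1} is not available, and one has to argue $\delta_k<J$ directly, as above, from the orthogonality of the updated error to the last subspace visited (plus the mild nondegeneracy $e^0\notin\bigcap_iV_i$ for the very first step). I expect this to be the main — and essentially the only — obstacle.
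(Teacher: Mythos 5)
Your proposal is correct, and for the main identity it follows exactly the route the paper intends: the paper gives no proof at all for this corollary, simply calling it ``a direct consequence of Theorem~\ref{thm:randomSSC-1},'' and your substitution $T_i=\overline{T}_i=P_i$, $B_aA=\sum_iP_i$, $(B_aAe,e)_A=\sum_i\|P_ie\|_A^2$ is precisely that specialization. Where you genuinely add something is the strict bound $\delta_k<J$: you are right that the sufficient condition $\|I-T_i\|_A<1$ used in Theorem~\ref{thm:randomSSC-1} is unavailable here, since for exact solvers $I-T_i=I-P_i$ is an $A$-orthogonal projection of norm $1$ whenever $V_i\subsetneq V$, so the theorem as stated does not deliver the upper bound claimed in the corollary. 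Your replacement argument --- that for $k\ge 1$ the error $e^k=(I-P_{i_k})e^{k-1}$ satisfies $P_{i_k}e^k=0$, hence $\sum_{i=1}^J\|P_ie^k\|_A^2\le(J-1)\|e^k\|_A^2$ pathwise, giving $\delta_k\le J-1$ after taking expectations --- is sound, and your treatment of $k=0$ (the bound can only fail if $e^0\in\bigcap_iV_i$, in which case one exact correction annihilates the error) correctly isolates the lone degenerate case. In short: same approach as the paper for the identity, plus a supplementary orthogonality argument that closes a gap the paper leaves open by asserting $\delta_k<J$ without a hypothesis that justifies it in the exact-solve regime.
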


Next theorem shows that the randomized SSC method converges almost
surely, i.e. converges with probability $1$, if all the subspace corrections are convergent.
\begin{theorem} \label{thm:randomSSC-1-p1}
If $\| I - T_i \|_A < 1$ for $i=1,2, \cdots, J$, then Algorithm \ref{alg:randomSSC-1} converges almost surely or with probability $1$, i.e., $\| e^k \|_A^2 \overset{a.s}{\longrightarrow} 0$.
\end{theorem}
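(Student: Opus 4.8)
The plan is to combine a deterministic, pathwise monotonicity of the energy error with the exponential decay of its expectation that was already established in Theorem \ref{thm:randomSSC-1}. Neither ingredient by itself is enough --- there is no uniform deterministic contraction factor, since a single correction in the subspace $V_i$ leaves the $A$-orthogonal complement of $V_i$ untouched --- but together they yield almost sure convergence by an elementary argument. Concretely, for any realization $\omega=(i_1,i_2,\ldots)$ of the random indices, Algorithm \ref{alg:randomSSC-1} produces $e^{k+1}=(I-T_{i_{k+1}})e^k$, so the hypothesis $\|I-T_i\|_A<1$ (in particular $\le 1$) gives $\|e^{k+1}\|_A\le\|e^k\|_A$ for every $k$. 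Hence, writing $X_k=\|e^k\|_A^2$ for the random variables constructed in the proof of Theorem \ref{thm:randomSSC-1}, the sequence $\{X_k(\omega)\}_{k\ge0}$ is non-increasing and bounded below by $0$; it therefore converges pointwise, $X_k\downarrow X_\infty$, to some measurable $X_\infty\ge 0$ with $0\le X_\infty\le X_k\le X_0=\|e^0\|_A^2$.

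Next I would establish that $E(X_k)\to 0$. Since $\|I-T_i\|_A<1$ for all $i$, every $\overline{T}_i$ is symmetric positive semidefinite, and because $\sum_iV_i=V$ the operator $B_aA=\sum_{i=1}^J\overline{T}_i$ is SPD on $V$; in particular $\lambda_{\min}(B_aA)>0$. By Remark \ref{rem:delta-randomSSC-1} we have $\delta_\ell\ge\lambda_{\min}(B_aA)>0$, and by Theorem \ref{thm:randomSSC-1} also $0<1-\delta_\ell/J$, so $0<\lambda_{\min}(B_aA)\le\delta_0<J$. Substituting these bounds into \eqref{eqn:randomSSC-1},
\[
E(X_k)=\Bigl(\prod_{\ell=0}^{k-1}\bigl(1-\tfrac{\delta_\ell}{J}\bigr)\Bigr)\|e^0\|_A^2\le\Bigl(1-\tfrac{\lambda_{\min}(B_aA)}{J}\Bigr)^{k}\|e^0\|_A^2,
\]
and the right-hand side tends to $0$ as $k\to\infty$ because $0<\lambda_{\min}(B_aA)/J<1$.

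Finally, since $0\le X_k\le X_0$ with $X_0$ a finite constant and $X_k\to X_\infty$ pointwise, the dominated convergence theorem gives $E(X_\infty)=\lim_{k\to\infty}E(X_k)=0$; as $X_\infty\ge0$ this forces $X_\infty=0$ almost surely, i.e. $\|e^k\|_A^2\to 0$ almost surely. I do not anticipate a genuine obstacle in this argument; the only point that needs care is not to expect a deterministic contraction rate and instead to exploit the pairing of pathwise monotonicity with mean decay, together with the fact that $B_aA$ is SPD (so $\lambda_{\min}(B_aA)>0$). As an alternative to the monotonicity step, one may note that the conditional-expectation identity computed in the proof of Theorem \ref{thm:randomSSC-1} exhibits $\{X_k\}$ as a nonnegative supermartingale for the filtration generated by $\{S_k\}$; Doob's convergence theorem then gives $X_k\to X_\infty$ almost surely, and Fatou's lemma together with $E(X_k)\to0$ again yields $X_\infty=0$ a.s.
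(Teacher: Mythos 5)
Your proof is correct, but it follows a genuinely different route from the paper. The paper's argument is a Borel--Cantelli argument: it applies Markov's inequality to get $P(\|e^k\|_A^2\ge\varepsilon)\le \varepsilon^{-1}E(\|e^k\|_A^2)$, uses the geometric bound $E(\|e^k\|_A^2)\le (1-\lambda_{\min}(B_aA)/J)^k\|e^0\|_A^2$ to show $\sum_k P(\|e^k\|_A^2\ge\varepsilon)<\infty$, and concludes almost sure (indeed, complete) convergence from the summability of the tail probabilities. You instead exploit a structural property the paper's proof never uses, namely that each step is non-expansive in the $A$-norm, so $X_k=\|e^k\|_A^2$ is pathwise non-increasing and converges pointwise; combined with $E(X_k)\to 0$ and dominated convergence (or, in your alternative, the supermartingale convergence theorem plus Fatou), this forces the pointwise limit to vanish a.s. What each approach buys: the paper's route needs the \emph{summable} (geometric) decay of $E(X_k)$ but no pathwise information, whereas yours needs only $E(X_k)\to 0$ at any rate, and is therefore more robust (it would survive, e.g., iteration-dependent probabilities for which $\delta_\ell/J$ is not bounded away from zero fast enough for summability, as long as the mean still tends to zero). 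Both proofs ultimately rest on the same quantitative input, $\lambda_{\min}(B_aA)>0$, and both inherit the paper's slight abuse in the hypothesis: taken literally, $\|I-T_i\|_A<1$ forces $V_i=V$, since $(I-T_i)v=v$ for $v\perp_A V_i$; the intended reading is that each subspace correction is convergent on $V_i$ (i.e., $\overline{T}_i$ is SPD on $V_i$), and your argument uses exactly the two consequences of that reading that survive ($\|I-T_i\|_A\le 1$ and $B_aA$ SPD), so it is, if anything, slightly more careful on this point.
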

\begin{proof}
In order to show the almost sure convergence, we need to show that $\sum_{k=1}^{\infty} P(\| e^k \|_A^2  \geq \varepsilon) < + \infty $ for any $\varepsilon > 0$.  Note that, by Markov's inequality, we have
\begin{align*}
P(\| e^k \|_A^2  \geq \varepsilon) \leq \frac{1}{\varepsilon} E(\| e^k \|_A^2) = \frac{1}{\varepsilon} \prod_{\ell=0}^{k-1}
\left( 1 - \frac{\delta_{\ell}}{J} \right) \| e^0 \|_A^2,
\end{align*}
Since $\| I - T_i \|_A < 1$ and according to Remark \ref{rem:delta-randomSSC-1}, we have $\lambda_{\min}(B_aA)<\delta_{\ell} < \lambda_{\max}(B_aA) < J$ and then,
\begin{align*}
\sum_{k=1}^{\infty}  \frac{1}{\varepsilon} \prod_{\ell=0}^{k-1} \left( 1 - \frac{\delta_{\ell}}{J} \right) \| e^0 \|_A^2 & \leq \frac{1}{\varepsilon} \| e^0 \|_A^2 \sum_{k=1}^{\infty} \left( 1 - \frac{\lambda_{\min}(B_aA)}{J}  \right)^{k-1} \\
& = \frac{1}{\varepsilon} \| e^0 \|_A^2 \frac{J}{\lambda_{\min}(B_aA)}  < + \infty.
\end{align*}
Therefore, we have that the series $\sum_{k=1}^{\infty} P(\| e^k \|_A^2  \geq \varepsilon)$ converges. 
\end{proof}

Next, we discuss the convergence rate for Algorithm \ref{alg:randomSSC-2}.  We introduce $B_\sigma$ to denote the corresponding operator for $J$ iterations using permutation $\sigma$, i.e. 
\begin{equation*}
I - B_\sigma A = (I - T_{\sigma(J)}) (I - T_{\sigma(J-1)}) \cdots (I- T_{\sigma(1)}).
\end{equation*}
We have the following theorem
\begin{theorem} Consider $B_\sigma$ defined by Algorithm \ref{alg:randomSSC-2}, we have
\begin{equation*}
E( \| I - B_\sigma A \|_A^2) = 1 - \frac{1}{J!} \sum_{i=1}^{J!} \frac{1}{c_{\sigma_i}}.
\end{equation*}
\end{theorem}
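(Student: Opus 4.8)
The plan is to reduce the statement to one application of the XZ\nobreakdash-identity (Theorem~\ref{thm:XZ-identity}) for each of the $J!$ possible orderings, followed by an elementary averaging over the uniformly drawn permutation. Fix a permutation $\sigma$ of $\mathbb{J}$ and consider the \emph{reordered} space decomposition $V = \sum_{k=1}^J V_{\sigma(k)}$, i.e.\ the same subspaces listed in the order $V_{\sigma(1)}, V_{\sigma(2)}, \ldots, V_{\sigma(J)}$. By lines~4--6 of Algorithm~\ref{alg:randomSSC-2}, the operator $B_\sigma$ with $I - B_\sigma A = (I - T_{\sigma(J)})\cdots (I - T_{\sigma(1)})$ is exactly the SSC operator $B_m$ of Algorithm~\ref{alg:SSC} associated with this reordered decomposition. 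Hence Theorem~\ref{thm:XZ-identity} applies and yields
\begin{equation*}
\|I - B_\sigma A\|_A^2 = 1 - \frac{1}{c_\sigma},
\end{equation*}
where $c_\sigma$ is the constant $c_1$ of \eqref{eqn:xz-c1} written for the reordered subspaces, namely
\begin{equation*}
c_\sigma = \sup_{\|v\|_A = 1}\ \inf_{\sum_k v_{\sigma(k)} = v}\ \sum_{k=1}^J \left\|\overline{T_{\sigma(k)}}^{-1/2}\Bigl(v_{\sigma(k)} + T_{\sigma(k)}^* P_{\sigma(k)}\sum_{l > k} v_{\sigma(l)}\Bigr)\right\|_A^2,
\end{equation*}
with $\overline{T_i}$ and $T_i$ as in Theorem~\ref{thm:XZ-identity}. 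In particular $0 < c_\sigma < \infty$, so $1/c_\sigma$ is well defined.

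Next I would fix an enumeration $\sigma_1,\ldots,\sigma_{J!}$ of all permutations of $\mathbb{J}$. For any realization of Algorithm~\ref{alg:randomSSC-2} the number $\|I - B_\sigma A\|_A^2$ is determined once $\sigma$ is drawn, so $\sigma \mapsto \|I - B_\sigma A\|_A^2$ is a discrete random variable which, by line~3 of the algorithm, takes the value $1 - 1/c_{\sigma_i}$ with probability $1/J!$ for each $i=1,\ldots,J!$. Taking expectations and using the previous step,
\begin{equation*}
E\bigl(\|I - B_\sigma A\|_A^2\bigr) = \sum_{i=1}^{J!}\frac{1}{J!}\left(1 - \frac{1}{c_{\sigma_i}}\right) = 1 - \frac{1}{J!}\sum_{i=1}^{J!}\frac{1}{c_{\sigma_i}},
\end{equation*}
which is the asserted identity.

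The only genuinely delicate point is the first step: one must check that the XZ\nobreakdash-identity, stated in Theorem~\ref{thm:XZ-identity} for the fixed labelling $V_1,\ldots,V_J$, applies verbatim to the permuted labelling. This is bookkeeping rather than new mathematics — the proof of the XZ\nobreakdash-identity uses only the order in which the factors $(I - T_i)$ are multiplied, not the names attached to the subspaces — but it should be spelled out that the relabelling $V_i \leftrightarrow V_{\sigma(i)}$ sends $T_i \mapsto T_{\sigma(i)}$, $P_i \mapsto P_{\sigma(i)}$, $\overline{T_i}\mapsto \overline{T_{\sigma(i)}}$, and the constraint $\sum_i v_i = v$ to $\sum_k v_{\sigma(k)} = v$, thereby turning $c_1$ into the $c_\sigma$ above. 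Once this is granted, the remainder is the finite average carried out in the second step; there is no measure\nobreakdash-theoretic subtlety, since the sample space of permutations is finite and $\|I - B_\sigma A\|_A^2$ is a bona fide function of $\sigma$.
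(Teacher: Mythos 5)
Your proposal is correct and follows essentially the same route as the paper: apply the XZ-identity to the decomposition reordered by $\sigma$ to get $\|I-B_{\sigma_i}A\|_A^2 = 1 - 1/c_{\sigma_i}$, then average over the uniformly distributed permutation. Your explicit remark that the relabelling $V_i \leftrightarrow V_{\sigma(i)}$ carries the XZ constants over verbatim is a detail the paper leaves implicit, and your averaging (summing over permutations with weight $1/J!$ rather than over the possibly coincident values $x_{\sigma_i}$) is if anything slightly cleaner.
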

\begin{proof}
According to XZ-identity \eqref{eqn:xz-identity}, for a permutation $\sigma_i$, we have
\begin{equation*}
|| I - B_{\sigma_i} A||_A^2 = 1 - \frac{1}{c_{\sigma_i}}.
\end{equation*}
Here, we need to introduce a different probability space $(\Omega, \mathcal{F}, P)$ where $\Omega = \{ \sigma_1, \sigma_2, \cdots, \sigma_{J!} \}$, i.e., the set of all possible permutations, $\mathcal{F}$ is again the $\sigma$-algebra of $\Omega$, and the probability $P$ is defined by $P(a) = |a|/J!$, $a \in \mathcal{F}$.  Based on this probability space, 
we define a random variable $X:  \Omega \mapsto \mathbb{R}$ as following,
\begin{equation*}
X(\sigma_i) =  \| I - B_{\sigma_i} A \|_A^2 = 1 - \frac{1}{c_{\sigma_i}} =: x_{\sigma_i}, \quad i = 1, 2, \cdots, J!
\end{equation*}  
$X$ is a discrete random variable an its expectation can be computed as following
\begin{align*}
E( \| I - B_\sigma A \|_A^2) = E(X) 
	& = \sum_{i=1}^{J!} x_{\sigma_i} P(X = x_{\sigma_i})  \\
	& = \sum_{i=1}^{J!} \left( 1 - \frac{1}{c_{\sigma_i}} \right) \, \frac{1}{J!}  \\
	& = 1 - \frac{1}{J!} \sum_{i=1}^{J!} \frac{1}{c_{\sigma_i}},
\end{align*}
which completes the proof. 
\end{proof}

\begin{remark}
Note that
\begin{align*}
1 - \frac{1}{J!} \sum_{i=1}^{J!} \frac{1}{c_{\sigma_i}} \leq 1 - \frac{1}{c_{\max}},
\end{align*}
where $c_{\max} = \max_{1\leq i \leq J!} c_{\sigma_i}$.  The equality
holds if and only if $c_{\sigma_i} = c_{\max} $ for all $\sigma_i$.
Therefore randomized SSC method Algorithm \ref{alg:randomSSC-2}
``improves'' the convergence rate of the traditional SSC method which
only considers the worst case scenario.
\end{remark}

\subsection{Fault-tolerant Randomized SSC}
In this section, we discuss the convergence rate of the fault-tolerant randomized SSC method (Algorithm \ref{alg:randomSSC-fault}).  The main assumption is that the errors occur with probability $\theta \in [0,1)$.  Next theorem says that the fault-tolerant randomized SSC method converges in expectation.  Again, we use $B_a$ to denote the operator corresponding to the PSC method with $\bar{R}_i = R_i^t + R_i - R_i^tA_iR_i$ as the inexact subspace solver.

\begin{theorem} \label{thm:randomSSC-fault}
Assume that error occurs with probability $\theta \in[0,1)$ which is independent of $k$ and how $i$ is picked, then the Algorithm \ref{alg:randomSSC-fault} converges with the expected convergence rate,
\begin{align} 
E(\|u - u^{k+1}\|_A^2) &= \left(1 -  \frac{(1- \theta)\delta_k}{J} \right) E( \| u-u^{k} \|_A^2) \nonumber \\
& = \prod_{\ell=0}^{k}\left(1 -  \frac{(1- \theta)\delta_{\ell}}{J} \right) || u-u^{0}||_A^2, \label{eqn:randomSSC-fault}
\end{align}
where  $\delta_k =  \frac{ E((B_a A e^k, e^k )_A)}{E( (e^k, e^k)_A)} >0$ and $e^k = u - u^k$. Moreover, if $\| I - T_i \|_A < 1$, then $\delta_k < J$.
\end{theorem}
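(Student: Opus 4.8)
The plan is to adapt, essentially line for line, the martingale-type argument from the proof of Theorem~\ref{thm:randomSSC-1}, enlarging the per-step sample space so that it also records whether a fault occurred. For iteration $k$ I would work on the product space $\bm{\Omega}_k = \Omega^k$ with the \emph{augmented} one-step space $\Omega = \{0,1,2,\dots,J\}$: the outcome $0$ represents ``a fault occurred'' and receives probability $\theta$, while the outcome $i\in\{1,\dots,J\}$ represents ``no fault, index $i$ selected'' and receives probability $(1-\theta)/J$; these weights sum to $1$. Because $\theta$ and the way $i$ is picked are, by hypothesis, independent of $k$, the induced transition probabilities $P(S_{k+1}=\omega_{k+1}\mid S_k=\omega_k)$ are stationary, exactly as in the fault-free case. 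The error-propagation map attached to outcome $0$ is the identity (then $u^{k+1}=u^k$, so $e^{k+1}=e^k$), and the map attached to $i$ is $I-T_i$; composing these along a realization $\omega_k$ defines the function $g_k$, and setting $X_k=g_k\circ S_k$ gives $X_k(\omega_k)=\|e^k\|_A^2$ as before.

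The next step is the one-step conditional expectation. Using $\|(I-T_i)e^k\|_A^2=\bigl((I-\bar T_i)e^k,e^k\bigr)_A$ and summing over the augmented outcomes,
\begin{align*}
E(X_{k+1}\mid S_k=\omega_k)
&= \theta\,\|e^k\|_A^2 + \frac{1-\theta}{J}\sum_{i=1}^J\bigl((I-\bar T_i)e^k,e^k\bigr)_A\\
&= \|e^k\|_A^2 - \frac{1-\theta}{J}\Bigl(\sum_{i=1}^J\bar T_i\,e^k,\ e^k\Bigr)_A
 = \|e^k\|_A^2 - \frac{1-\theta}{J}\,(B_aAe^k,e^k)_A,
\end{align*}
where the last identity $\sum_{i=1}^J\bar T_i = B_aA$ is precisely the one used in the proof of Theorem~\ref{thm:randomSSC-1} ($B_a$ being the PSC operator built from the symmetrized smoothers $\bar R_i$). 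Applying $E(X_{k+1})=E\bigl(E(X_{k+1}\mid S_k)\bigr)$ and linearity of expectation yields
\begin{equation*}
E(\|e^{k+1}\|_A^2) = E(\|e^k\|_A^2) - \frac{1-\theta}{J}\,E\bigl((B_aAe^k,e^k)_A\bigr) = \Bigl(1-\frac{(1-\theta)\delta_k}{J}\Bigr)E(\|e^k\|_A^2)
\end{equation*}
by the definition of $\delta_k$. Telescoping this identity from $\ell=0$ to $k$ gives the product formula in \eqref{eqn:randomSSC-fault}.

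For the final assertion I would reuse the pointwise estimate from the proof of Theorem~\ref{thm:randomSSC-1}: if $\|I-T_i\|_A<1$ for each $i$, then $(B_aAe^k,e^k)_A=\bigl(\sum_{i=1}^J\bar T_ie^k,e^k\bigr)_A<J\,(e^k,e^k)_A$ holds pointwise, and taking expectations gives $\delta_k<J$; positivity of $\delta_k$ comes from $B_aA$ being SPD, as already noted in Remark~\ref{rem:delta-randomSSC-1}.

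I do not expect a genuine analytic obstacle here, since the content is identical to Theorem~\ref{thm:randomSSC-1}; the only point requiring care is the bookkeeping, namely formalizing the augmented sample space and making precise the assumption that the fault event is independent both of the iteration index $k$ and of the random choice of $i$, so that the conditional expectation collapses to the clean convex combination of the ``fault'' contribution $\theta\|e^k\|_A^2$ and the ``no-fault'' contribution.
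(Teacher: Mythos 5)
Your proposal is correct and follows essentially the same route as the paper: the same augmented one-step sample space $\{0,1,\dots,J\}$ with weights $\theta$ and $(1-\theta)/J$ (the paper encodes the identity map for the fault outcome by setting $T_0=0$), the same one-step conditional expectation collapsing to $\|e^k\|_A^2-\frac{1-\theta}{J}(B_aAe^k,e^k)_A$, and the same tower-property and telescoping steps. The final bound $\delta_k<J$ is also argued exactly as in the paper, by carrying over the pointwise estimate from Theorem~\ref{thm:randomSSC-1}.
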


\begin{proof}
Note that, if there is no error (with probability $1-\theta$), for a given $i$, we have
\begin{equation*}
\| e^{k+1} \|_A^2  = \| (I- T_i) e^k \|_A^2 = ( \left( I - \bar{T}_i \right) e^k, e^k )_A,
\end{equation*}
otherwise, we have
\begin{equation*}
\| e^{k+1} \|_A^2 = \| e^{k} \|_A^2.
\end{equation*}
Let us consider the probability space
$(\bm{\Omega}_k, \bm{\mathcal{F}}_k, P)$ where 
\begin{equation*}
\bm{\Omega}_k = \underbrace{\Omega \times \Omega \times \cdots \times \Omega}_{k},  \quad \Omega:= \{0, 1, 2, \cdots, J\},
\end{equation*}
where $\{ 0 \}$ means error occurs, which corresponds to the case that
the \textbf{if} statement (line 1 of Algorithm
\ref{alg:randomSSC-fault}) branching with ``true'' condition and
following the line after \textbf{then}. The choice $\{ i \}$,
$i = 1, 2, \cdots, J$, means there is no error and that index $i$ is
picked at random. This, latter case corresponds to the \textbf{else}
statement (line 1 of Algorithm \ref{alg:randomSSC-fault}) and index
$i$ is picked (line 4 of Algorithm \ref{alg:randomSSC-fault}).  $\bm{\mathcal{F}}_k$ contains all the subset of $\bm{\Omega}_k$.  The probability $P$ is given in the following way: We set
$P(\emptyset) = 0$.  Next, $P(\{0\})=\theta\in [0,1)$ is the
probability that error occurs. Further, $P(\{ i \})$ is the
probability that there is no error and index $i$ is picked. Assuming
that all hard/soft errors occur independently from how $i$ is picked,
we have $P(\{ i\}) = P( \text{no error} \cap \text{$i$ is picked}) =
P(\text{no error}) \, P(\text{$i$ is picked}) = (1-\theta) (1/J) =
(1-\theta)/ J$.
For other events $a \in \mathcal{F}$, $P(a)$ is defined using the
\emph{countable additivity} property.

As in the proof of Theorem \ref{thm:randomSSC-1}, we define random variables $S_k(x): \bm{\Omega}_k \mapsto \mathbb{R}$ such that,
\begin{equation*}
S_{k}(\omega_k) = \omega_k, \ \omega_k = (i_1, i_2, \cdots, i_k) \in \bm{\Omega}_k, \ i_m \in \Omega, \ m = 1, \cdots, k.
\end{equation*}
Because we pick $i$ is independent of the iteration number $k$, we have,
\begin{equation*}
P( S_{k+1} = \omega_{k+1} \, | \, S_k = \omega_k)   
=
\begin{cases}
\theta, & \ \text{if} \ \omega_{k+1} = (\omega_k, 0), \\
\frac{1-\theta}{J}, & \ \text{if} \ \omega_{k+1} = (\omega_k, i), \ i= 1, 2, \cdots, J \\
0, & \text{otherwise}
\end{cases}
\end{equation*}

Then we define functions 
\begin{equation*}
g_{k}(\omega_k) := \| (I - T_{i_k}) (I - T_{i_{k-1}}) \cdots (I - T_{i_1}) e^0 \|_A^2, \ \omega_k = (i_1, i_2, \cdots, i_k)
\end{equation*}
with $T_0 = 0$ and the compositions of $g_{k}$ and $S_{k}$ define other random variables which we denote by
\begin{equation*}
X_{k} = g_{k}\circ S_{k},
\end{equation*}
and, for $\omega_{k} \in \bm{\Omega}_{k}$
\begin{equation*}
X_{k}(\omega_{k}) = g_{k}(S_{k}(\omega_{k})) = \| (I - T_{i_k}) (I - T_{i_{k-1}}) \cdots (I - T_{i_1}) e^0 \|_A^2 := \| e^k \|_A^2,
\end{equation*}

Therefore, then we compute the conditional expectation as following
\begin{align*}
E(X_{k+1} \, | \, S_{k})(\omega_k) &= E(X_{k+1} \, | \, S_{k} = \omega_k) \\
&= \sum_{\omega_{k+1} \in \bm{\Omega}_{k+1}} X_{k+1}(\omega_{k+1}) P( S_{k+1} = \omega_{k+1} \, | \, S_k = \omega_k ) \\
& =  \theta \| e^k \|_A^2 +  \sum_{i=1}^J \frac{(1-\theta)}{J} \| (I- T_i) e^k \|_A^2 \\
 & =  \theta \| e^k \|^2_A +  \frac{1-\theta}{J} J (e^k, e^k)_A - \sum_{i=1}^J \frac{1-\theta}{J} (\bar{T}_ie^k, e^k)_A\\
 & = \|e^k\|^2_A  -  \frac{1-\theta}{J } (B_a A e^k, e^k)_A.
\end{align*}
Following the proof of Theorem \ref{thm:randomSSC-1}, we apply the
identity $E(X) = E(E(X|Y))$ and use the
linearity of the expectation to derive \eqref{eqn:randomSSC-fault}. This
completes the proof.
\end{proof}

\begin{remark}
we can estimate the constant $\delta_k$ as in Remark \ref{rem:delta-randomSSC-1}, i.e.
\begin{equation*}
\lambda_{\min}(B_aA) \leq \delta_k \leq \lambda_{\max} (B_aA).
\end{equation*}
Therefore, we have
\begin{equation*}
1 - \frac{(1-\theta )\delta}{J} \leq 1 - \frac{(1-\theta)\lambda_{\min}(B_aA)}{J}.
\end{equation*}
\end{remark}

\begin{remark}
After $J$ steps, we have
\begin{equation*}
E(\| e^{J} \|^2_A) \leq  \left(1 - \frac{(1-\theta)\lambda_{\min}(B_aA)}{J} \right)^J || e^0||_A^2,
\end{equation*}
and the energy error reduction is bounded by
\begin{equation*}
\left(1 - \frac{(1-\theta) \lambda_{\min}(B_aA)}{J} \right)^J \approx 
\exp \left( (\theta-1) \lambda_{\min}(B_aA) \right), \quad 0\le \theta <1.
\end{equation*}
\end{remark}

The following corollary consider a special case of Theorem \ref{thm:randomSSC-fault} that  all the subspace corrections are exact, i.e, $R_i = A_i^{-1}$.  
\begin{corollary} \label{thm:randomSSC-fault-exact}
Assume that errors occurs with probability $\theta \in[0,1)$ which is independent of $k$, then Algorithm \ref{alg:randomSSC-fault} with $R_i = A_i^{-1}$ converges with the expected error decay rate,
\begin{align} 
E(\|u - u^{k+1}\|_A^2) &= \left(1 -  \frac{(1-\theta)\delta_k}{J} \right) E( \| u-u^{k}\|_A^2)  \nonumber\\
& =  \prod_{\ell=0}^{\ell=k}\left(1 - \frac{(1-\theta)\delta_{\ell}}{J}\right) \| u - u^0 \|_A^2, \label{eqn:randomSSC-1-exact}
\end{align}
where $0 < \delta_k = E (\sum_{i=1}^J ||P_i e^k ||_A^2)/ E(||e^k||_A^2) < J$.
\end{corollary}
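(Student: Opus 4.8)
The plan is to obtain this statement as a direct specialization of Theorem \ref{thm:randomSSC-fault} to the case of exact subspace solvers. First I would record the algebraic simplification that occurs when $R_i = A_i^{-1}$: in that case $T_i = R_i A_i P_i = P_i$ is the $A$-orthogonal projection onto $V_i$, so $T_i$ is $A$-self-adjoint and idempotent, and consequently its symmetrization satisfies $\overline{T_i} = T_i + T_i^* - T_i^*T_i = 2P_i - P_i^2 = P_i$. Hence the PSC operator appearing in Theorem \ref{thm:randomSSC-fault} reduces to $B_a A = \sum_{i=1}^J \overline{T_i} = \sum_{i=1}^J P_i$, and for every $v\in V$
\begin{equation*}
(B_a A v, v)_A = \sum_{i=1}^J (P_i v, v)_A = \sum_{i=1}^J \|P_i v\|_A^2 .
\end{equation*}

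Next I would substitute this identity, evaluated at $v = e^k$, into the conclusion \eqref{eqn:randomSSC-fault} of Theorem \ref{thm:randomSSC-fault}. Since the hypotheses assumed here (errors occurring with probability $\theta\in[0,1)$, independent of $k$ and of how $i$ is picked) are precisely those of that theorem, the one-step identity $E(\|u-u^{k+1}\|_A^2) = \bigl(1 - (1-\theta)\delta_k/J\bigr)E(\|u-u^k\|_A^2)$ holds with $\delta_k = E\bigl((B_aAe^k,e^k)_A\bigr)/E(\|e^k\|_A^2)$; replacing $(B_aAe^k,e^k)_A$ by $\sum_{i=1}^J\|P_ie^k\|_A^2$ gives the closed form for $\delta_k$ claimed in the corollary, and telescoping the one-step identity over $\ell = 0,1,\dots,k$ yields the product form in the statement.

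It remains to establish $0 < \delta_k < J$. For the lower bound I would argue, exactly as in Remark \ref{rem:delta-randomSSC-1}, that by linearity and monotonicity of expectation $\delta_k$ lies between $\lambda_{\min}(B_aA)$ and $\lambda_{\max}(B_aA)$; since $V = \sum_{i=1}^J V_i$, the operator $\sum_{i=1}^J P_i$ is SPD (if $\sum_i P_i v = 0$ then $\sum_i\|P_iv\|_A^2 = 0$, so $v\perp_A V_i$ for all $i$, hence $v\perp_A V$ and $v=0$), so $\lambda_{\min}(B_aA)>0$ and $\delta_k>0$. For the upper bound one has $\sum_{i=1}^J\|P_iv\|_A^2 \le \sum_{i=1}^J\|v\|_A^2 = J\|v\|_A^2$, hence $\delta_k\le J$. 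The only point that requires care beyond a verbatim transfer from Theorem \ref{thm:randomSSC-fault} is the strictness $\delta_k<J$: one cannot simply invoke the hypothesis $\|I-T_i\|_A<1$ of that theorem, because with exact solvers $\|I-T_i\|_A = \|I-P_i\|_A = 1$ whenever $V_i\neq V$. Instead I would note that equality $\sum_i\|P_iv\|_A^2 = J\|v\|_A^2$ forces $v\in V_i$ for every $i$, and that under the mild non-degeneracy of the decomposition ($\bigcap_{i=1}^J V_i = \{0\}$) this is impossible for a nonzero $v$; since $e^k\neq 0$ on the relevant event, $\delta_k<J$ follows. This last step is the main (though minor) obstacle; the rest is bookkeeping inherited from Theorem \ref{thm:randomSSC-fault}.
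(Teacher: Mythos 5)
Your proposal is correct and follows the route the paper intends: the paper offers no separate proof of this corollary, presenting it as an immediate specialization of Theorem \ref{thm:randomSSC-fault} via $T_i=\bar T_i=P_i$ and $B_aA=\sum_{i=1}^J P_i$ when $R_i=A_i^{-1}$, which is exactly your first two paragraphs. Your third paragraph goes beyond the paper in a useful way: the strict bound $\delta_k<J$ cannot be inherited from the theorem's hypothesis $\|I-T_i\|_A<1$, since with exact solvers $\|I-P_i\|_A=1$ whenever $V_i\neq V$, and indeed equality $\delta_k=J$ can occur for degenerate decompositions (e.g.\ $V_1=V_2=V$ with $\theta>0$). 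Your fix — observing that $\sum_i\|P_iv\|_A^2=J\|v\|_A^2$ forces $v\in\bigcap_i V_i$ and imposing $\bigcap_{i=1}^J V_i=\{0\}$ — is a legitimate repair of a hypothesis the paper leaves implicit; the lower bound via positive definiteness of $\sum_i P_i$ from $V=\sum_i V_i$ is also correct.
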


Next theorem shows that the fault-tolerant randomized SSC method converges almost surely or with probability $1$ if all the subspace corrections are convergent.  
 
\begin{theorem} \label{thm:randomSSC-fault-p1}
Assume that $\| I - T_i \|_A < 1$ for $i=1,2, \cdots, J$. Moreover, assume that errors occur with probability $\theta \in[0,1)$ and independent of $k$, Algorithm \ref{alg:randomSSC-fault} converges almost surely or with probability $1$, i.e., $\| e^k \|_A^2 \overset{a.s}{\longrightarrow} 0$.
\end{theorem}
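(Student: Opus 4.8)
The plan is to mirror the proof of Theorem~\ref{thm:randomSSC-1-p1} and close with the first Borel--Cantelli lemma. It suffices to show that for every fixed $\varepsilon>0$ the series $\sum_{k=1}^{\infty}P(\|e^k\|_A^2\ge\varepsilon)$ converges; then, since $\{\|e^k\|_A^2\not\to 0\}=\bigcup_{n\ge 1}\{\|e^k\|_A^2\ge 1/n \text{ infinitely often}\}$ is a countable union of null events, one gets $\|e^k\|_A^2\overset{a.s}{\longrightarrow}0$. (If $e^0=0$ there is nothing to prove, so one may assume $\|e^0\|_A>0$.)

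First I would bound the tail by Markov's inequality and substitute the exact expected decay rate furnished by Theorem~\ref{thm:randomSSC-fault}:
\[
P(\|e^k\|_A^2\ge\varepsilon)\le\frac{1}{\varepsilon}E(\|e^k\|_A^2)=\frac{\|e^0\|_A^2}{\varepsilon}\prod_{\ell=0}^{k-1}\left(1-\frac{(1-\theta)\delta_\ell}{J}\right).
\]
Next I would invoke the hypothesis $\|I-T_i\|_A<1$: as in Remark~\ref{rem:delta-randomSSC-1} and the remark following Theorem~\ref{thm:randomSSC-fault}, this yields $0<\lambda_{\min}(B_aA)\le\delta_\ell\le\lambda_{\max}(B_aA)<J$, so each factor obeys $0\le 1-\frac{(1-\theta)\delta_\ell}{J}\le q$ with $q:=1-\frac{(1-\theta)\lambda_{\min}(B_aA)}{J}\in[0,1)$, the strict bound $q<1$ using both $\theta<1$ and $\lambda_{\min}(B_aA)>0$. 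Hence $E(\|e^k\|_A^2)\le\|e^0\|_A^2\,q^{k}$, and
\[
\sum_{k=1}^{\infty}P(\|e^k\|_A^2\ge\varepsilon)\le\frac{\|e^0\|_A^2}{\varepsilon}\sum_{k=1}^{\infty}q^{k}=\frac{\|e^0\|_A^2}{\varepsilon}\left(\frac{J}{(1-\theta)\lambda_{\min}(B_aA)}-1\right)<\infty,
\]
the series being a convergent geometric one precisely because $\theta\in[0,1)$ keeps $1-q>0$. The first Borel--Cantelli lemma then gives $P(\|e^k\|_A^2\ge\varepsilon \text{ i.o.})=0$ for each $\varepsilon>0$, completing the argument.

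I do not expect a genuine obstacle here: the proof is structurally identical to that of Theorem~\ref{thm:randomSSC-1-p1}, with the only new feature being that the fault probability $\theta$ enters linearly, replacing $\delta_\ell$ by $(1-\theta)\delta_\ell$ in the contraction factor. The single point requiring care is ensuring the geometric ratio $q$ is \emph{strictly} below $1$ so that the series sums; this is exactly where the assumptions $\theta<1$ (so $1-\theta>0$) and $\|I-T_i\|_A<1$ (which forces the PSC operator $B_aA$ to be SPD, hence $\lambda_{\min}(B_aA)>0$) are both used.
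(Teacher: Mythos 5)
Your proposal is correct and follows essentially the same route as the paper's proof: Markov's inequality combined with the expected decay identity from Theorem~\ref{thm:randomSSC-fault}, the bound $\lambda_{\min}(B_aA)\le\delta_\ell$ forcing a geometric ratio strictly below one, and summability of the tail probabilities (the paper leaves the Borel--Cantelli step implicit, which you spell out). The only differences are cosmetic, such as indexing the geometric series from $q^k$ rather than $q^{k-1}$.
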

\begin{proof}
Following the proof  of Theorem \ref{thm:randomSSC-1-p1},  we have
\begin{align*}
P(\| e^k \|_A^2  \geq \varepsilon) \leq \frac{1}{\varepsilon} E(\| e^k \|_A^2) = \frac{1}{\varepsilon} \prod_{\ell=0}^{k-1} \left( 1 - \frac{(1-\theta)\delta_{\ell}}{J} \right) \| e^0 \|_A^2.
\end{align*}
Since $\| I - T_i \|_A < 1$, therefore, we have $\lambda_{\min}<\delta_{\ell} < \lambda_{\max} < J$ and,
\begin{align*}
\sum_{k=1}^{\infty} \frac{1}{\varepsilon} \prod_{\ell=0}^{k-1} \left( 1 - \frac{(1-\theta)\delta_{\ell}}{J} \right) \| e^0 \|_A^2 & \leq \frac{1}{\varepsilon} \| e^0 \|_A^2 \sum_{k=1}^{\infty} \left( 1 - \frac{(1-\theta)\lambda_{\min}(B_aA)}{J}  \right)^{k-1} \\
&= \frac{1}{\varepsilon} \| e^0 \|_A^2 \frac{J}{(1-\theta)\lambda_{\min}(B_aA)} < + \infty.
\end{align*}
Therefore, we have $\sum_{k=1}^{\infty} P(\| e^k \|_A^2  \geq \varepsilon) < + \infty $ which completes the proof. 
\end{proof}

Theorem \ref{thm:randomSSC-fault} and \ref{thm:randomSSC-fault-p1} suggest that, when error occurs, simply do not update the solution or reject the update.  The randomized SSC method are guaranteed to converge to the correct solution.  Such property is useful for developing error resilience algorithms.

\section{Application of Markov Chains}
In this section, we try to explain the theory in terms of the discrete time Markov chains which may allow more general consideration of the randomized method of subspace correction.

Let us first consider Algorithm~\ref{alg:randomSSC-1}.  In this case, $\Omega = \{ 1, 2, \cdots, J \}$ is considered as the \emph{state space} and choosing an index $i \in \Omega$ at each iteration $k$ gives a sequence $\{ S_k \}_{k \geq 0}$ of random variables with value in the set $\Omega$.  Because we choose index $i$ independent of $k$ at each step, for every sequence $i_0, i_1, \cdots, i_{k-1}, i_k, i_{k+1}$ of elements of $\Omega$, $k > 0$, we have the following \emph{Markov property} holds for $\{ S_k \}_{k \geq 0}$,
\begin{equation*}
P(S_{k+1} = i_{k+1}| S_{k} = i_k, S_{k-1} = i_{k-1}, \cdots, S_{0} = i_0) = P(S_{k+1} = i_{k+1}| S_{k} = i_k).
\end{equation*}
Moreover, since we choose index uniformly at each iteration, we have,
\begin{equation*}
P(S_{k+1} = i_{k+1}| S_{k} = i_k) = \frac{1}{J}.
\end{equation*}
This gives the \emph{transition matrix} $\mathbf{P} \in \mathbb{R}^{J \times J}$ such that
\begin{equation*}
p_{ij} = P(S_{k+1} = j | S_{k} = i) = \frac{1}{J}.
\end{equation*}
Note that 
\begin{equation*}
\mathbf{P} = \frac{1}{J} \mathbf{1} \mathbf{1}^T, \ \text{and} \ \mathbf{P}^n = \mathbf{P}, \ n = 2, 3, \cdots
\end{equation*}
where $\mathbf{1} = (1,1, \cdots, 1)^T$.  Based on those definitions, we can see that the sequence $\{ S_k \}_{k \geq 0}$ is indeed a \emph{Markov chain}.

Now we try to prove Theorem~\ref{thm:randomSSC-1} again using the language of Markov chain.
\begin{proof}[Proof of Theorem~\ref{thm:randomSSC-1}]
As before, we define functions 
\begin{equation*}
g_k(i_k) := \| (I - T_{i_k}) e^{k-1} \|_A^2, \  i_k \in \Omega,
\end{equation*}
and the composition of $g_k$ and $S_k$ define a random variable $X_k = g_k \circ S_k$. Denote $g_k^{i_k} := g_k(i_k)$, for $i_k \in \Omega$, we can compute the conditional expectation as following
\begin{align*}
E(X_{k+1}| X_k)(g_k^{i_k}) &= E(X_{k+1} | X_k = g_k^{i_k}) \\
& = \sum_{i_{k+1} \in \Omega} g_{k+1}^{i_{k+1}} \ P(X_{k+1} = g_{k+1}^{i_{k+1}}  | X_k = g_k^{i_k}) \\
& = \sum_{i_{k+1} \in \Omega} g_{k+1}^{i_{k+1}} \ P(S_{k+1} = i_{k+1}  | S_k = i_k)\\
& = \sum_{i_{k+1} \in \Omega} \| (I - T_{i_{k+1}}) e^k \|_A^2 \ \frac{1}{J} \\
& = \sum_{i \in \Omega} \| (I - T_i) e^k \|_A^2 \ \frac{1}{J} \\
& = \| e^k \|_A^2 - \frac{1}{J}(B_aA e^k, e^k)_A.
\end{align*}
Then apply $E(X) = E(E(X|Y))$ as before, we finish the proof. 
\end{proof}

\section{Conclusion} \label{sec:conclu}
We study the convergence behavior of the randomized subspace correction methods.  In stead of the usual upper bound for the convergence rate, we derived an identity for the estimation of the expect error decay rate in energy norm and also show the the randomized algorithm converges almost surely if all the subspace correction converges. 

We also propose another version randomized subspace correction method in which each subspace is corrected once within $J$ iterations.  We theoretically prove that it is convergent by using the XZ-identity and show how it improves the standard SSC method at the worst case in terms of the convergence rate.  

In order to improve the error resilience of the subspace correction methods, we develop a fault-tolerant variant of the randomized method by rejecting any correction when error occurs.  We show that the fault-tolerant iterative method based on such approach converges with probability $1$ if all the subspace corrections are convergent and, moreover, we also derive a sharp identity estimate for the convergence rate.  These results show the intrinsic fault-tolerant features of the subspace correction method and its potential in extreme-scale computing by introducing randomization.


\bibliographystyle{spmpsci}      
\bibliography{RandomMSC}   


\end{document}